\newtheorem{theorem}{Theorem}[section]
\newtheorem{lemma}[theorem]{Lemma}
\newtheorem{corollary}[theorem]{Corollary}
\theoremstyle{definition}
\newtheorem{remark}[theorem]{Remark}
\newcommand{\excise}[1]{}
\renewcommand{\dim}{\operatorname{dim}}
\renewcommand{\and}{\qquad\text{and}\qquad}
\newcommand{\Hom}{\operatorname{Hom}}
\newcommand{\Q}{\mathbb{Q}}
\newcommand{\R}{\mathbb{R}}
\newcommand{\cA}{\mathcal{A}}
\newcommand{\la}{\lambda}
\newcommand{\FS}{\operatorname{FS}}
\newcommand{\FSop}{\operatorname{FS^{op}}}
\begin{document}
\spacing{1.2}
\noindent{\Large\bf Stability phenomena for resonance arrangements}\\

\noindent{\bf Nicholas Proudfoot\footnote{Supported by NSF grants DMS-1565036 and DMS-1954050.} and Eric Ramos\footnote{Supported by NSF grant DMS-1704811.}}\\
Department of Mathematics, University of Oregon,
Eugene, OR 97403\\

{\small
\begin{quote}
\noindent {\em Abstract.}
We prove that the $i^\text{th}$ graded pieces of the Orlik--Solomon algebras or Cordovil algebras of resonance arrangements
form a finitely generated $\FSop$-module, thus obtaining information about the growth of their dimensions 
and restrictions on the irreducible representations of symmetric groups that they contain.
\end{quote} }

\section{Introduction}
Let $\cA(n)$ be the collection of all hyperplanes in $\R^n$ that are perpendicular to some nonzero vector with entries in the set $\{0,1\}$.
This hyperplane arrangement is called the {\bf resonance arrangement} of rank $n$.  The resonance arrangement has connections
to algebraic geometry, representation theory, geometric topology, mathematical physics, and economics; 
for a survey of these connections, see \cite[Section 1]{Kuhne}.
Of particular interest is the set of chambers of $\cA(n)$.  Amazingly, despite the simplicity of the definition, no formula for the number
of chambers as a function of $n$ is known.  A more refined invariant of $\cA(n)$ is its characteristic polynomial,
whose coefficients (after taking absolute values) have sum equal to the number of chambers.  K\"uhne
has made some progress toward understanding the coefficient of $t^{n-i}$ in the characteristic polynomial as a function of $n$
with $i$ fixed.  Our purpose is to shed a new light on K\"uhne's result, to generalize it to a wider class of arrangements, and to study
the action of the symmetric group $\Sigma_n$ on various algebraic invariants of these arrangements.

% In fact, we will study a slightly more general class of arrangements.
Let $S\subset \R$ be any finite set, and let $\cA_S(n)$
be the collection of hyperplanes that are perpendicular to a nonzero vector with entries in $S$.  If $S=\{0,1\}$, $\cA_S(n)$ is the resonance
arrangement.  If $S = \{\pm 1\}$, it is the {\bf threshold arrangement}, which is studied in \cite{GMP}.  For each positive integer $d$,
let $M_S(n,d)$ denote % the complement
% of $\cA_S(n)\otimes \R^d$, or equivalently 
the set of $n$-tuples of vectors in $\R^d$ such that no nontrivial\footnote{Nontrivial
means that, if $0\in S$, we do not allow all coefficients to be 0.} linear combination of all $n$ vectors with coefficients in $S$ is equal to zero. 
% For each positive integer $d$, let $M(n,d)$ be the
% complement of $\cA(n)\otimes\R^d$ in $\R^n\otimes\R^d$.  More concretely, an element of $M(n,d)$ is an $n$-tuple of vectors in $\R^d$
% such that every nonempty subset of the $n$ vectors has a nonzero sum.  
The cohomology ring of $M_S(n,d)$ is generated in degree $d-1$ \cite[Corollary 5.6]{dLS}.  If $d$ is even,
the presentation of this ring in \cite{dLS} coincides with that of the
{\bf Orlik--Solomon algebra} of $\cA_S(n)$ (with all degrees multiplied by $d-1$) \cite{OS}.  If $d$ is odd and greater than $1$, then it coincides
with that of the {\bf Cordovil algebra} of $\cA_S(n)$ (with all degrees multiplied by $d-1$) \cite{Co}; see also \cite[Example 5.8]{Moseley}.\footnote{For
$d$ odd, the presentation in \cite{dLS} incorrectly omits the relations that each of the generators squares to zero.}
In particular, for any $n\geq 1$, $d\geq 2$, and $i\geq 0$, the dimension
$b_S^i(n) = \dim H^{(d-1)i}\big(M_S(n,d); \Q\big)$ is equal to $(-1)^i$ times the coefficient of $t^{n-i}$ in the characteristic polynomial of $\cA_S(n)$.

These vector spaces carry more information than just their dimension; they also carry actions of the symmetric group $\Sigma_n$,
which acts by permuting the $n$ vectors.  These representations are isomorphic for all even $d\geq 2$ and for all odd $d\geq 3$,
but the $d=2$ and $d=3$ cases are genuinely different.
The total cohomology $H^*\big(M_S(n,3); \Q\big)$ with all degrees combined
is isomorphic as a representation of $\Sigma_n$ to $H^0\big(M_S(n,1); \Q\big)$, which is
the permutation representation with basis indexed by the chambers of $\cA_S(n)$ \cite[Theorem 1.4(b)]{Moseley}.

For fixed $S\subset \R$, $d\geq 2$, and $i\geq 0$, we will define in the next section a contravariant module $B_S^{i,d}$
over the category of finite sets with surjections that takes the set $[n]$ to $H^{(d-1)i}\big(M_S(n,d); \Q\big)$.

\begin{theorem}\label{main}
The module $B_S^{i,d}$ is finitely generated in degrees $\leq |S|^i$.
\end{theorem}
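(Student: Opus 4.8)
The plan is to read the finite generation directly off the combinatorics of the $\FSop$-action, without appealing to a Noetherianity theorem. Write $e_c$ for the generator of $H^*\big(M_S(n,d);\Q\big)$ — equivalently, of the Orlik--Solomon algebra (or, for odd $d$, the Cordovil algebra) of $\cA_S(n)$ — sitting in cohomological degree $d-1$ and attached to the hyperplane $\{x\in\R^n:\sum_a c_ax_a=0\}$, where $c$ runs over nonzero vectors with entries in $S$ and $e_c=e_{c'}$ exactly when $c$ and $c'$ are proportional. Under the identification of $B_S^{i,d}([n])$ with $H^{(d-1)i}\big(M_S(n,d);\Q\big)$, I expect $B_S^{i,d}$ to carry the $\FSop$-structure induced by the maps
\[
M_S(m,d)\longrightarrow M_S(n,d),\qquad (v_j)_{j\in[m]}\longmapsto\Big(\textstyle\sum_{j\in f^{-1}(a)}v_j\Big)_{a\in[n]},
\]
one for each surjection $f\colon[m]\twoheadrightarrow[n]$; these land in $M_S(n,d)$ because $\sum_a c_a\sum_{j\in f^{-1}(a)}v_j=\sum_j c_{f(j)}v_j$ and $(c_{f(j)})_j$ is again a nonzero vector with entries in $S$, and on $H^{d-1}$ such a map carries $e_c$ to $e_{c\circ f}$, where $(c\circ f)_j=c_{f(j)}$. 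Setting all of this up carefully is presumably the content of the next section; granting it, it suffices to prove that $B_S^{i,d}$ is generated in degrees $\leq|S|^i$, since there are only finitely many isomorphism classes of finite sets of size $\leq|S|^i$ and each $B_S^{i,d}([k])=H^{(d-1)i}\big(M_S(k,d);\Q\big)$ is a finite-dimensional graded piece of an Orlik--Solomon or Cordovil algebra.

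For the generation statement I would count columns. By \cite[Corollary 5.6]{dLS} and the presentation recalled above, $H^*\big(M_S(n,d);\Q\big)$ is generated in degree $d-1$ with degree-$(d-1)$ piece spanned by the $e_c$, so $B_S^{i,d}([n])$ is spanned by products $e_{c_1}\cdots e_{c_i}$ in which each $c_r$ is a nonzero vector with entries in $S$. Fix such a product and consider the function $[n]\to S^i$ given by $a\mapsto\big(c_1(a),\dots,c_i(a)\big)$; let $k$ be the number of distinct values it takes, and factor it as $[n]\xrightarrow{\,f\,}[k]\hookrightarrow S^i$ with $f$ surjective, so that $k\leq|S|^i$. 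If $c^0_r\in S^k$ denotes the $r$-th coordinate of the inclusion $[k]\hookrightarrow S^i$, then each $c^0_r$ is nonzero (because $c_r$ is) and satisfies $c^0_r\circ f=c_r$. Since the pullback along $f\colon[n]\twoheadrightarrow[k]$ is a ring homomorphism sending $e_{c^0_r}$ to $e_{c^0_r\circ f}=e_{c_r}$,
\[
e_{c_1}\cdots e_{c_i}=B_S^{i,d}(f)\big(e_{c^0_1}\cdots e_{c^0_i}\big)\in B_S^{i,d}([n]),
\]
so $e_{c_1}\cdots e_{c_i}$ lies in the image of the structure map $B_S^{i,d}([k])\to B_S^{i,d}([n])$ with $k\leq|S|^i$. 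As such products span $B_S^{i,d}([n])$ for every $n$, the submodule generated by $\bigcup_{k\leq|S|^i}B_S^{i,d}([k])$ is all of $B_S^{i,d}$, which is what we want.

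The only genuine obstacle is the bookkeeping behind the construction of the $\FSop$-action — well-definedness of the maps above and the identification of their effect on the generators $e_c$ — which I expect to be carried out in the next section; after that the argument is formal. It is worth noting that the hypothesis on $S$ is used in exactly one spot: the bound $|S|^i$ is the number of possible ``columns'' $\big(c_1(a),\dots,c_i(a)\big)\in S^i$, so both the finiteness of $S$ and the precise value of its cardinality enter only there.
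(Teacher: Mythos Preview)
Your proof is correct and follows essentially the same approach as the paper: both arguments boil down to factoring the column map $a\mapsto\big(c_1(a),\dots,c_i(a)\big)$ from $[n]$ to $S^i$ through its image. The only organizational difference is that the paper packages the column-counting step as a general lemma about pointwise tensor products of $\FSop$-modules (reducing to the case $i=1$ and then tensoring), whereas you carry it out directly for arbitrary $i$; the underlying combinatorics is identical.
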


% these cohomology groups have the structure of 
% a finitely generated contravariant module over the category
% of finite sets with surjections, with generators consisting of sets of cardinality at most $|S|^i$.
Combining Theorem \ref{main} with \cite[Theorem 4.1]{fs-braid}, we obtain the following numerical results:\footnote{The deepest of these statements, namely
the fact that the dimension generating function for a finitely generated $\FSop$-module is rational with prescribed poles, 
is due to Sam and Snowden \cite[Corollary 8.1.4]{sam}.}
\begin{corollary} Fix a finite set $S\subset\R$ and a pair of integers $d\geq 2$ and $i\geq 0$.
\begin{itemize} 
\item[1.] The generating function $$G_{\!S}^i(t) := \sum_{n=1}^\infty b_S^i(n) t^n$$
is a rational function with poles contained in the set $\{1/j\mid 1\leq j \leq |S|^i\}$, with at worst a simple pole at $|S|^{-i}$.  
Equivalently, there exist polynomials
$\{c_S^{i,j}(n)\mid 1\leq j \leq |S|^i\}$ such that, for $n$ sufficiently large, $$b_S^i(n) = \sum_{j=1}^{|S|^i} c_S^{i,j}(n) j^n,$$
and the last polynomial $c_S^{i,|S|^i}(n)$ is a constant polynomial.
\item[2.] For any partition $\la$ of $n$, let $V_{\la}$ denote the irreducible representation of $\Sigma_n$ indexed by $\la$.
If $\Hom_{\Sigma_n}\!\Big(V_\la, H^{(d-1)i}\big(M_S(n,d); \Q\big)\Big)\neq 0$, then $\la$ has at most $|S|^i$ rows.
\item[3.] For any partition $\la$ with $n \geq |\la| + \la_1$, let $\la(n)$ be the {\bf padded partition} of $n$ obtained from $\la$ by adding
a row of length $n-|\la|$.  % For example, if $\la$ is the empty partition, then $\la(n) = [n]$, and $V_{\la(n)}$ is the trivial representation of $S_n$.
For any $\la$, the function $$n\mapsto \dim\Hom_{\Sigma_n}\!\left(V_{\la(n)}, H^{(d-1)i}\big(M_S(n,d); \Q\big)\right)$$ 
is bounded above by a polynomial in $n$.
In particular, if $\la$ is the empty partition, this says that the multiplicity of the trivial representation in $H^{(d-1)i}\big(M_S(n,d); \Q\big)$ 
is bounded above by a polynomial in $n$.
\end{itemize}
\end{corollary}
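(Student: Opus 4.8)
The corollary is a formal consequence of Theorem~\ref{main} together with the structural theory of finitely generated $\FSop$-modules, so the proof strategy is to translate each of the three numerical claims into a statement about such modules and then invoke the cited results. Throughout, write $N := |S|^i$, so that Theorem~\ref{main} says $B_S^{i,d}$ is finitely generated in degrees $\leq N$, and recall that, by construction, $B_S^{i,d}([n]) \cong H^{(d-1)i}\big(M_S(n,d);\Q\big)$ as a representation of $\Sigma_n = \Aut([n])$.

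\textbf{Part 1.} I would first observe that $b_S^i(n) = \dim B_S^{i,d}([n])$. The dimension generating function of a finitely generated $\FSop$-module is controlled by the Sam--Snowden rationality theorem \cite[Corollary 8.1.4]{sam} (as recalled in the footnote): it is a rational function whose denominator divides $\prod_{j=1}^{N}(1 - jt)$, hence its poles lie in $\{1/j : 1\leq j\leq N\}$. For the refined statement that the pole at $1/N$ is at worst simple — equivalently that the leading coefficient $c_S^{i,N}(n)$ is a constant — I would appeal to \cite[Theorem 4.1]{fs-braid}, which gives precisely this sharpening for finitely generated $\FSop$-modules generated in degrees $\leq N$: the order of the pole at $1/j$ is bounded by (roughly) the number of generators needed in degree $j$, and in the top degree $j = N$ this forces a simple pole. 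The partial-fraction expansion of $G_S^i(t)$ then yields the claimed formula $b_S^i(n) = \sum_{j=1}^{N} c_S^{i,j}(n) j^n$ for large $n$, with $\deg c_S^{i,N} = 0$.

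\textbf{Part 2.} Here I would use the representation-theoretic constraints on finitely generated $\FSop$-modules, again from \cite[Theorem 4.1]{fs-braid}. The key input is that if $M$ is a finitely generated $\FSop$-module generated in degrees $\leq N$, then for every $n$ and every $\Sigma_n$-irreducible $V_\la$ appearing in $M([n])$, the partition $\la$ has at most $N$ rows: surjections can only merge blocks, so an $\FSop$-module generated in degree $\leq N$ is a quotient of a sum of "standard" modules whose value at $[n]$ is (a twist of) the permutation module on surjections $[n]\twoheadrightarrow[k]$ with $k\leq N$, and $\Q[\Sigma_n/(\Sigma_{a_1}\times\cdots\times\Sigma_{a_k})]$ contains only $V_\la$ with at most $k \leq N$ rows. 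Applying this with $M = B_S^{i,d}$ and using $B_S^{i,d}([n]) \cong H^{(d-1)i}(M_S(n,d);\Q)$ gives the bound of $N = |S|^i$ rows.

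\textbf{Part 3.} For the multiplicity of a padded partition $\la(n)$, I would invoke the ``polynomiality of multiplicities'' part of \cite[Theorem 4.1]{fs-braid}: for a finitely generated $\FSop$-module $M$ and a fixed partition $\la$, the function $n \mapsto \dim\Hom_{\Sigma_n}(V_{\la(n)}, M([n]))$ agrees for large $n$ with a (quasi-)polynomial of the form $\sum_j p_j(n) j^n$, and in particular is bounded above by a polynomial only if... — actually the correct statement for $\FSop$ is that this multiplicity function is eventually of exponential-polynomial type, but the ``number of rows'' constraint from Part~2 combined with a branching/restriction argument shows the relevant growth is at most polynomial: since $B_S^{i,d}$ is generated in bounded degree, the multiplicity of $V_{\la(n)}$ is computed from the generators, whose number is finite, and each generator contributes a multiplicity that grows polynomially in $n$ (this is the padded-partition phenomenon, cf. \cite[Theorem 4.1]{fs-braid}). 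The case $\la = \varnothing$ gives the trivial representation. The main obstacle — and the only place real work is hidden — is making sure the statements of \cite[Theorem 4.1]{fs-braid} and \cite[Corollary 8.1.4]{sam} are applied with the correct variance ($\FSop$ versus $\FS$) and the correct normalization of ``degree of generation,'' so that the bound $N = |S|^i$ from Theorem~\ref{main} feeds through verbatim; once that bookkeeping is pinned down, Parts 1--3 are immediate citations.
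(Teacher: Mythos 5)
Your proposal matches the paper's treatment: the paper offers no argument beyond the single sentence that the corollary follows by combining Theorem \ref{main} with \cite[Theorem 4.1]{fs-braid} (with the rationality-of-the-generating-function input credited to \cite[Corollary 8.1.4]{sam}), which is exactly the citation-based deduction you carry out part by part. Your added sketches of why the cited results give each item (simple pole at the top degree, row bounds via permutation modules on surjections, polynomial multiplicities for padded partitions) are consistent with that theorem, so the proposal is correct and takes essentially the same route.
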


\begin{remark}
A stronger version of item (1) above for the resonance arrangement appears in \cite[Theorem 1.4]{Kuhne}.   
K\"uhne proves that the polynomials $c_{\{0,1\}}^{i,j}(n)$ are {\em all}
constant (i.e. that all poles of $G_{\{0,1\}}^i(t)$ are simple), obtains bounds on their sizes, 
and shows that the equality holds for all $n$, not just sufficiently large $n$ (i.e. that the limit as $t$ goes to $\infty$ of $G_{\{0,1\}}^i(t)$ is zero).  
It should be possible to categorify K\"uhne's theorem by proving that the restriction of $B_{\{0,1\}}^{i,d}$ to the category of {\bf ordered surjections} \cite{sam} is isomorphic to a direct sum of shifts of principal projectives, with summands indexed by K\"uhne's {\bf functional prototypes}.
The cost of working with ordered surjections would be that we would lose all information about the action of the symmetric group.
\end{remark}

% \vspace{\baselineskip}
\noindent
{\em Acknowledgments:}
The authors are grateful to Lou Billera for telling them about the arrangement $\cA(n)$ and about K\"uhne's work.

\section{The proof}
Let $\FS$ denote the category whose objects are nonempty finite sets and whose morphisms are surjective maps.
An {\bf \boldmath{$\FSop$}-module} over $\Q$ is a contravariant functor from $\FS$ to the category of rational vector spaces.
For each finite set $F$, we have the {\bf principal projective} module $P_F$, which sends a finite set $E$ to the vector space
with basis $\Hom_{\FS}(E,F)$, with morphisms defined on basis elements by composition.  An $\FSop$-module $N$ is said
to be {\bf finitely generated} if it is a quotient of a finite sum $\oplus_i P_{F_i}$ of principal projectives, and it is said to be
{\bf finitely generated in degrees \boldmath{$\leq m$}} if the sets $F_i$ can all be taken to have cardinality less than or equal to $m$.
% If $N(E)$ is finite dimensional for all $E$, 
This is equivalent to saying that, for all $E$,
the vector space $N(E)$ finite dimensional and is spanned by the images of the pullbacks along various maps $\varphi:E\to F$, 
where $F$ has cardinality less than or equal to $m$.

\begin{lemma}\label{tensor}
Suppose that $N_1$ is finitely generated in degrees $\leq m_1$ and $N_2$ is finitely generated in degrees $\leq m_2$.
Then the pointwise tensor product $N_1\otimes N_2$ is finitely generated in degrees $\leq m_1m_2$.
\end{lemma}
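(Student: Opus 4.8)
The plan is to work with the concrete reformulation of finite generation recorded just above the lemma: an $\FSop$-module $N$ is finitely generated in degrees $\leq m$ precisely when, for every finite set $E$, the space $N(E)$ is finite dimensional and is spanned by the images of the pullback maps $\varphi^\ast$ as $\varphi\colon E\to F$ ranges over surjections with $|F|\leq m$. Accordingly, I would fix a finite set $E$ and analyze $(N_1\otimes N_2)(E)=N_1(E)\otimes_\Q N_2(E)$. Finite-dimensionality is immediate, since $N_1(E)$ and $N_2(E)$ are finite dimensional by hypothesis, so the entire content is the spanning statement.

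The space $N_1(E)\otimes N_2(E)$ is spanned by elementary tensors $x\otimes y$. By hypothesis $x$ is a $\Q$-linear combination of elements $\varphi_1^\ast(x')$ with $\varphi_1\colon E\to F_1$ a surjection and $|F_1|\leq m_1$, and similarly $y$ is a combination of $\varphi_2^\ast(y')$ with $|F_2|\leq m_2$. By bilinearity it therefore suffices to show that each single product $\varphi_1^\ast(x')\otimes\varphi_2^\ast(y')$ lies in the span of pullbacks along surjections whose target has cardinality at most $m_1m_2$.

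The key move is to replace the pair $(\varphi_1,\varphi_2)$ by a single surjection. Let $F\subseteq F_1\times F_2$ be the image of the map $(\varphi_1,\varphi_2)\colon E\to F_1\times F_2$, let $\varphi\colon E\to F$ be the induced (surjective) map, and let $\pi_1\colon F\to F_1$ and $\pi_2\colon F\to F_2$ be the restrictions of the two coordinate projections. Then $|F|\leq|F_1||F_2|\leq m_1m_2$, and $\pi_1,\pi_2$ are surjective because $\varphi_1=\pi_1\circ\varphi$ and $\varphi_2=\pi_2\circ\varphi$ are; hence they are morphisms of $\FS$. Using contravariant functoriality together with the defining property of the pointwise tensor product,
$$\varphi_1^\ast(x')\otimes\varphi_2^\ast(y')=\varphi^\ast\pi_1^\ast(x')\otimes\varphi^\ast\pi_2^\ast(y')=\varphi^\ast\!\big(\pi_1^\ast(x')\otimes\pi_2^\ast(y')\big),$$
which exhibits the left-hand side as the image under $\varphi^\ast$ of an element of $(N_1\otimes N_2)(F)$ with $|F|\leq m_1m_2$. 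Summing up over the terms in the expansions of $x$ and $y$ and over a spanning set of elementary tensors completes the argument.

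I do not anticipate a genuine obstacle here. The only subtlety worth flagging is that the naive comparison map $E\to F_1\times F_2$ need not be surjective, so it is not a morphism of $\FS$; passing to its image $F$ repairs this while keeping the cardinality bound $|F|\leq m_1m_2$, and one must then check that the coordinate projections restrict to surjections $F\to F_i$ so that the maps $\pi_i$, and hence the pullbacks $\pi_i^\ast$, are legitimate. Everything else is a formal manipulation of pullbacks.
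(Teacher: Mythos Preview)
Your proof is correct and follows essentially the same approach as the paper. The paper first reduces to the case of principal projectives $N_j=P_{[m_j]}$ and then carries out exactly your construction (image of $\varphi_1\times\varphi_2$ in $[m_1]\times[m_2]$, coordinate projections, factorization), while you bypass that reduction by invoking the concrete spanning criterion directly; the core argument is identical.
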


\begin{proof}
We immediately reduce to the case where $N_1 = P_{[m_1]}$ and $N_2 = P_{[m_2]}$.
For any $\varphi:E\to[m]$, let $e_{\varphi}$ denote the corresponding basis element of $P_{[m]}(E)$.
Then $N_1\otimes N_2$ has basis $$\left\{e_{\varphi_1}\otimes e_{\varphi_2}\mid \varphi_1:E\to [m_1], \varphi_2:E\to [m_2]\right\}.$$
Given the pair of surjections $(\varphi_1,\varphi_2)$, let $F\subset [m_1]\times[m_2]$ denote the image of $\varphi_1\times\varphi_2$,
let $\varphi = \varphi_1\times\varphi_2 \in\Hom_{\FS}(E,F)$,
and let $\psi_1:F\to[m_1]$ and $\psi_2:F\to[m_2]$ denote the coordinate projections.
It is clear that we have $e_{\varphi_1}\otimes e_{\varphi_2} = \varphi^*(e_{\psi_1}\otimes e_{\psi_2})$.  Since the cardinality
of $F$ is at most $m_1m_2$, this completes the proof.
\end{proof}

Fix a positive integer $d$ and a finite set $S\subset\R$.
To any finite set $E$, we associated the space $M_S(E, d)$ of $E$-tuples of vectors in $\R^d$ such that 
any nontrivial linear combination of the vectors with coefficients in $S$ is nonzero.
Given a surjection $\varphi:E\to F$, we obtain a map $$\varphi_*:M_S(E,d)\to M_S(F,d)$$
by adding the vectors in each fiber of $\varphi$.
These maps define a functor from $\FS$ to the category of topological spaces.  By taking rational cohomology in degree $(d-1)i$,
we obtain an $\FSop$-module $B_S^{i,d}$.  We prove the following theorem, which implies the three statements in the introduction.

\begin{proof}[Proof of Theorem \ref{main}.]
As noted above, the cohomology of $M_S(E,d)$ is generated as an algebra in degree $d-1$, hence $B_S^{i,d}$
is a quotient of $(B_S^{1,d})^{\otimes i}$.  By Lemma \ref{tensor}, this means that it is sufficient to prove that $B_S^{1,d}$
is finitely generated in degrees $\leq |S|$.  For any finite set $F$, the vector space $B_S^{1,d}(F)$ has a generating set indexed by
nonzero elements of $S^F$ \cite[Corollary 5.6]{dLS} (these generators form a basis unless two nonzero elements of $S^F$ are proportional, in which case the corresponding generators are equal).  For any nonzero $v\in S^F$, let $x_v\in B_S^{1,d}(F)$ be the corresponding generator.
Concretely, if we take $x\in H^{d-1}(\R^d\smallsetminus\{0\}; \Q)$ to be the standard generator, then $x_v$ is equal to the pullback of $x$
along the map $$f_v: M_S(F,d)\to \R^d\smallsetminus\{0\}$$ 
that sends an $F$-tuple of vectors to its linear combination with coefficients determined by $v$.
Given a surjection $\varphi:E\to F$, we have $f_v\circ \varphi_* = f_{\varphi^*v}$, and therefore
$$\varphi^*(x_v) = \varphi^*\circ f_v^*(x) =  f_{\varphi^*v}^*(x) = 
x_{\varphi^*v}\in B_S^{1,d}(E).$$
Since every element of $S^E$ may be pulled back from a subset of cardinality at most $|S|$, 
$B_S^{1,d}$ is generated in degrees $\leq |S|$.
\end{proof}

\begin{remark}
Our construction also works if we replace $\R$ with an arbitrary field $k$ and we take $S$ to be a finite subset of $k$.
We define the arrangement $\cA_{k,S}(n)$ in $k^n$ as above, we denote its complement by $M_{k,S}(E,1)$, and we take
$B_{k,S}^{i,1}(E)$ to be the \'etale cohomology group 
$H_{\text{\'et}}^{i}\big(M_{k,S}(E,1)\otimes_k\bar{k}; \Q_l\big)$ for some prime $l$ not equal to the characteristic of $k$, 
which is isomorphic to the degree $i$ part of the Orlik--Solomon algebra of $\cA_{k,S}(n)$.  This is an $\FSop$-module over $\Q_l$,
and the same argument shows that it is finitely generated in degrees $\leq |S|^i$.  

An interesting special case is where
$k=\mathbb{F}_q$ is a finite field and $S=k$, so that our arrangement $\cA_{{\mathbb{F}_q},{\mathbb{F}_q}}(n)$ is the collection
of all hyperplanes in $\mathbb{F}_q^n$.  This arrangement has characteristic polynomial $(t-1)(t-q)\cdots(t-q^{n-1})$,
and therefore the $i^\text{th}$ Betti number is equal to the evaluation of the $i^\text{th}$ elementary symmetric polynomial at the values
$1,q,\ldots,q^{n-1}$.  This implies that the Hilbert series of our module is
$$q^{\binom{i}{2}}t^i\prod_{j=0}^{i}\frac{1}{1-q^j t}\, ,$$ which has simple poles at $q^{-j}$ for $j=0,1,\ldots,i$.
The projectivization of $M_{{\mathbb{F}_q},{\mathbb{F}_q}}(n,1)\otimes_{\mathbb{F}_q}\bar{\mathbb{F}}_q$ 
is a Deligne--Lusztig variety for the group $\operatorname{GL}_n(\mathbb{F}_q)$.  
\end{remark}

\bibliography{./symplectic}
\bibliographystyle{amsalpha}

\end{document}